\theoremstyle{definition}
\newtheorem*{acknowledgements}{Acknowledgements}
\numberwithin{equation}{section}
\newcommand\vanish[1]{}	
\newcommand\ourcomment[1]{ \textbf{[#1]} }
\newcommand\oc\ourcomment
\newtheorem{theorem}{Theorem}[section]
\newtheorem{conjecture}{Conjecture}[section]
\title{An application of the Gy\'{a}rf\'{a}s path argument}
\author{Vaidy Sivaraman}
\begin{document}
\maketitle
\begin{abstract}
We adapt the Gy\'{a}rf\'{a}s path argument to prove that $t-2$ cops can capture a robber, in at most $t-1$ moves, in the game of cops and robbers played in a graph that does not contain the $t$-vertex path as an induced subgraph. 
\end{abstract}

\maketitle

 The game of cops and robbers is played on a finite simple graph $G$. There are $k$ cops and a single robber. Each of the cops chooses a vertex to start, and then the robber chooses a vertex. And then they alternate turns starting with the cop. In the turn of cops, each cop either stays on the vertex or moves to a neighboring vertex. In the robber's turn, he stays on the same vertex or moves to a neighboring vertex. The cops win if at any point in time, one of the cops lands on the robber. The robber wins if he can avoid being captured. The cop number of $G$, denoted $c(G)$, is the minimum number of cops needed so that the cops have a winning strategy in $G$.  The question of what makes a graph to have high cop number is not clearly understood. Some fundamental results were proved in \cite{AF} and \cite{NW}. The book by Bonato and Nowakowski \cite{BN} is a fantastic source of information on the game of cops and robbers. (A quick primer on the cop number is \cite{AB}.) \\

All graphs in this article are finite and simple. For graphs $H,G$ we say that $G$ is $H$-free if $G$ does not contain $H$ as an induced subgraph. For a vertex $v$, the closed neighborhood of $v$, denoted by $N[v]$, is the set of all neighbors of $v$, including $v$ itself. For a positive integer $t$, $P_t$ will denote the path graph on $t$ vertices. The Gy\'{a}rf\'{a}s path argument was first used by Gy\'{a}rf\'{a}s \cite{AG} to prove that $P_t$-free are ``$\chi$-bounded". Since then it has become a standard proof technique in graph coloring (see \cite{SS}). Maria Chudnovsky and Paul Seymour (private communication) point out that there is a very nice adaptation of the Gy\'{a}rf\'{a}s path argument by Bousquet, Lagoutte, and Thomasse \cite{BLT} to a notoriously hard problem on induced subgraphs, the ``Erd\"{o}s-Hajnal conjecture".  The purpose of this note is show another application of it outside graph coloring to prove results quickly and efficiently. Joret, Kaminski, and Theis \cite{JKT} proved that $t-2$ cops can capture a robber in a $P_t$-free graph. Their proof proceeds in rounds and uses induction, and some essential details are missing. We prove a strengthening of their theorem. Our proof has the advantage of being conceptually simple, with the robber being captured very quickly, and for readers familiar with the Gy\'{a}rf\'{a}s path argument, can be summarized compactly as ``Place the $t-2$ cops on a Gy\'{a}rf\'{a}s path".

  \begin{theorem}
  Let $G$ be a connected $P_t$-free graph ($t \geq 3$). Then $t-2$ cops can capture the robber in at most $t-1$ moves. 
  \end{theorem}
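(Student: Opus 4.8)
The plan is to run the Gy\'arf\'as path construction adaptively during the game, placing the $i$-th cop on the $i$-th vertex of an induced path while trapping the robber in a shrinking connected region. Concretely, I would maintain, once the cop assigned to $v_i$ is in place, a connected vertex set $C_i$ satisfying: (i) $v_1 v_2 \cdots v_i$ is an induced path; (ii) the robber's current vertex lies in $C_i$; (iii) no vertex of $C_i$ has a neighbour in $\{v_1,\ldots,v_{i-1}\}$, while $v_i$ does have a neighbour in $C_i$; and (iv) the cops on $v_1,\ldots,v_i$ guard the boundary, so the robber can never leave $C_i$ without stepping onto a vertex already occupied or dominated by a cop. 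Initially I take $v_1$ to be an arbitrary vertex, all cops starting there, and $C_1$ the component of $G - N[v_1]$ containing the robber once he has been placed.

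For the inductive step I would choose $v_{i+1}$ to be a neighbour of $v_i$ lying in $C_i$, and let $C_{i+1}$ be the component of $C_i \setminus N[v_{i+1}]$ containing the robber. Since $C_i$ is connected and by (iii) has no neighbour among the earlier path vertices, $v_{i+1}$ is adjacent to $v_i$ but to none of $v_1,\ldots,v_{i-1}$, so the path stays induced; and $C_{i+1} \subseteq C_i$ inherits the ``no neighbour among $\{v_1,\ldots,v_i\}$'' property. The cop assigned to $v_{i+1}$ walks out along the path to reach it, and the robber, confined to $C_i$ and unable to cross $N[v_{i+1}]$ without capture, is pushed into the smaller region $C_{i+1}$.

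The key step, and the place where $P_t$-freeness enters, is bounding the length of this path. If after placing $i$ cops the residual region $C_i$ still contained a vertex $w$ at distance at least $2$ from $v_i$ inside $C_i$, then choosing $x$ on a shortest $v_i$--$w$ path within $C_i$ yields $v_1 v_2 \cdots v_i\, x\, w$, an induced path on $i+2$ vertices by invariant (iii). Hence $P_t$-freeness prevents $i+2$ from reaching $t$, and the process must terminate with $i \le t-2$ and with $v_i$ dominating the entire residual region. At that point the robber sits on a vertex of $N[v_i]$ with $i \le t-2$, so at most $t-2$ cops have been used and the cop at $v_i$ captures him on the following move.

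I expect the main obstacle to be the careful bookkeeping of timing rather than the structural core, which the argument above already pins down. Two points need care. First, invariant (iv): I must verify that a robber attempting to escape $C_i$ \emph{backwards} is forced through some $N[v_j]$ where a cop already sits, which is exactly what condition (iii) supplies. Second, the move count: since the cops may all start at $v_1$ and fan out one vertex at a time, I would track that the robber is caught no later than the move on which the last cop reaches the far end of the at-most-$(t-2)$-vertex path, and check that this translates into capture within $t-1$ moves. The degenerate early cases (the robber placed inside $N[v_1]$, or becoming dominated before all cops are deployed) only shorten the game and should be routine.
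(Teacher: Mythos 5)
Your overall strategy is the paper's: build a Gy\'arf\'as path during play, leave one cop on each path vertex, and confine the robber to a shrinking connected region until $P_t$-freeness forces capture. But the construction you give does not produce the objects your invariants describe, and the problem is not just bookkeeping. You set $C_1$ to be the component of $G - N[v_1]$ containing the robber and then ask for $v_2$, ``a neighbour of $v_1$ lying in $C_1$'' --- no such vertex exists, since $C_1$ is by definition disjoint from $N[v_1]$. The same misalignment persists at every step: with $C_{i+1}$ defined as a component of $C_i \setminus N[v_{i+1}]$, the set $C_{i+1}$ is disjoint from $N[v_{i+1}]$, so the second half of your invariant (iii) (``$v_{i+1}$ has a neighbour in $C_{i+1}$'') is false, and it is exactly this half that your termination argument needs in order to extend $v_1\cdots v_i$ by two vertices $x,w$ into $C_i$. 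Indeed, under your definitions $N(v_i)\cap C_i=\emptyset$, every vertex of $C_i$ is at distance at least $2$ from $v_i$, and the robber, sitting in $C_i$, is never in $N[v_i]$; so the process as written neither terminates nor ends in capture.

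The fix is not merely to relabel. The correct order of operations is: having placed $v_i$ so that it has a neighbour in $C_i$, let $C_{i+1}$ be the component containing the robber of $C_i \setminus N[v_i]$ --- the closed neighbourhood of the vertex the cop already occupies, which the robber cannot enter without being caught on the next move --- and only then choose $v_{i+1}\in N(v_i)\cap C_i$ \emph{that has a neighbour in $C_{i+1}$}. An arbitrary neighbour of $v_i$ in $C_i$ will not do: it may fail to see the robber's component, and then the invariant dies one step later. The existence of a good $v_{i+1}$ is the one substantive existence claim in the whole argument: since $C_i$ is connected and $C_{i+1}$ is a nonempty proper subset of it, some edge leaves $C_{i+1}$ into $C_i\setminus C_{i+1}$, and its far endpoint must lie in $N(v_i)\cap C_i$ (otherwise it would belong to $C_{i+1}$). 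This carefully constrained choice, and the connectivity argument showing it is always available, is the heart of the Gy\'arf\'as path technique and is missing from your proposal; once it is supplied, your inducedness, termination, and move-counting remarks go through and the proof coincides with the paper's.
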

  
  \begin{proof}
  Let $v_0 \in V(G)$. In the first move player C (the cop player) places all the $t-2$ cops in $v_0$. One of the cops is going to be stationary at $v_0$ and the other $t-3$ cops will move in the next turn. The robber will choose a vertex in $V(G) \setminus N[v_0]$. Let $v_1$ be a neighbor of $v_0$ that has a neighbor in the component $C_1$ of $G - N[v_0]$ containing the robber vertex. Now player C moves $t-3$ of his cops from $v_0$ to $v_1$. Now the robber moves to some vertex, and let $v_2$ be a neighbor of $v_1$ that has a neighbor in the component $C_2$ of $G[C_1] - N[v_1]$ containing the robber vertex. Player C moves $t-4$ of his cops from $v_1$ to $v_2$. This procedure is repeated so that at the end of $t-2$ moves, we have an induced path $v_0-v_1- \cdots -v_{t-3}$, a nested sequence of connected vertex sets $C_1 \supseteq C_2 \cdots \supseteq C_{t-3}$, where $v_i \not  \in C_i$ but $v_i$ has a neighbor in $C_i$.  We claim that that the robber can move only to a neighbor of some $v_{i}$. For, if the robber is not in the neighborhood of any $v_i$ after his move, then $v_0- \cdots v_{t-3}$ together with a shortest path from $v_{t-3}$ to the current robber vertex in $C_{t-3}$ (such a path exists because $C_{t-3}$ is connected and $v_{t-3}$ has a neighbor in $C_{t-3}$) is an induced path with at least $t$ vertices, a contradiction. The robber is caught in the next move, completing the proof.  
  \end{proof}
  
The path constructed in the proof is called a Gy\'{a}rf\'{a}s path, but in graph coloring, the component is chosen to be the one with the largest chromatic number rather than the component containing the robber. The simplicity of the argument presented immediately suggests: Can we do better if we understand more about the structure of $P_t$-free graphs? We conclude with a conjecture. 
  
   \begin{conjecture}
   Let $G$ be a connected $P_t$-free graph ($t \geq 5$). Then $t-3$ cops can capture the robber.
  \end{conjecture}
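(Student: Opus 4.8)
The plan is to begin exactly as in the theorem, but with one fewer cop, and then to work much harder at the endgame. First I would run the Gy\'arf\'as path argument with $t-3$ cops, producing an induced path $v_0 - v_1 - \cdots - v_{t-4}$ on $t-3$ vertices together with nested connected sets $C_1 \supseteq C_2 \supseteq \cdots \supseteq C_{t-4}$, where the cop at $v_i$ guards $N[v_i]$ and thereby keeps the robber inside $C_{i+1}$. The same induced-path computation as before now yields a \emph{weaker} conclusion: if the robber were at distance at least $3$ from $v_{t-4}$ inside $C_{t-4}$, then $v_0 - \cdots - v_{t-4}$ followed by a shortest path to the robber would be an induced path on at least $t$ vertices, a contradiction. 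Hence after $t-3$ moves the robber is confined to the ball of radius $2$ about $v_{t-4}$ inside $C_{t-4}$, rather than to $N(v_{t-4})$ as in the theorem. If the robber sits at distance $1$ the cop at $v_{t-4}$ finishes immediately, so the whole difficulty is concentrated in the distance-$2$ case.

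The heart of the matter is therefore an \emph{endgame lemma}: once the robber is trapped in the radius-$2$ ball $B = \{v_{t-4}\} \cup \{x \in C_{t-4} : d(v_{t-4},x) \le 2\}$, the cops already in play must complete the capture. I would first record two warnings that rule out the cheap strategies. A single cop sitting at $v_{t-4}$ dominates $N(v_{t-4})$ but cannot corner a robber oscillating among distance-$2$ vertices attached to different neighbors of $v_{t-4}$; and one cannot simply reclaim the rear cop at $v_0$ and push the frontier deeper, because a robber in $C_{t-4}$ may be adjacent to a vertex of $N(v_0)$ without creating any forbidden $P_t$ (the escape edge short-circuits the long guarded path), so vacating $v_0$ is genuinely unsafe. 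Consequently the endgame must exploit the fine structure of $P_t$-free graphs, not just the path argument — precisely the improvement the concluding remark calls for.

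For the structural input I would pursue domination theorems for $P_t$-free graphs of Bacs\'o--Tuza and Camby--Schaudt type, which for connected $P_t$-free graphs produce connected dominating sets of controlled structure (e.g.\ inducing subgraphs with shorter longest induced paths). Applied to $G[B]$, such a set would let $t-4$ cops herd the robber's ``shadow'' onto a dominated core of bounded spread, after which the last cop converts the dominated shadow into an actual capture; this is where the saving of one cop would come from, since the dominating core needs fewer guards than the ball itself. Alternatively, one can attempt an \emph{advancing Gy\'arf\'as path} that slides the whole frontier one step into $C_{t-4}$ each round while re-establishing confinement, spending the extra room bought by the robber's depth.

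The step I expect to be the main obstacle is making the shadow-to-robber transfer rigorous: projecting the robber onto a dominating set does not respect moves turn-for-turn, so one must show that capturing the projection in the induced (smaller) instance can be lifted to a capture in $G$ with no net increase in cops. In the advancing-path formulation the same obstacle reappears as a precise timing argument: one must prove that the robber cannot exploit the momentarily vacated rear vertex during a slide, and — as the second warning shows — this cannot follow from $P_t$-freeness alone, so it must be coupled with a guarantee that after each slide the robber is re-confined to a \emph{strictly} smaller component, forcing termination before any escape. Settling either version for all $t \ge 5$, rather than just the base cases $t = 5, 6$ where the ball is small enough to analyze by hand, is the crux of the conjecture.
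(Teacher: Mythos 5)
You are attempting a statement that the paper itself leaves as an open conjecture, so there is no proof of record to compare against; the only question is whether your argument closes the problem, and it does not. Your opening step is correct and is the natural one: running the Gy\'arf\'as path construction with $t-3$ cops produces an induced path $v_0-v_1-\cdots-v_{t-4}$ with nested connected sets $C_1\supseteq\cdots\supseteq C_{t-4}$, and the same induced-path count as in the theorem shows that a robber not adjacent to any $v_i$ must lie within distance $2$ of $v_{t-4}$ in $G[C_{t-4}\cup\{v_{t-4}\}]$, on pain of exhibiting a $P_t$. From that point on, however, the proposal is a research plan rather than a proof. The ``endgame lemma'' is never stated precisely, let alone proved: the appeal to Bacs\'o--Tuza and Camby--Schaudt type domination results is only a pointer to possible structural input, and the shadow/projection idea founders on exactly the difficulty you name yourself --- capturing the robber's image on a dominating set only guarantees that some cop is \emph{near} the robber, and converting domination into capture is in general precisely where an additional cop is spent. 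The ``advancing path'' alternative comes with neither a safety argument (why the robber cannot slip past a momentarily vacated $v_i$, which your own second warning shows cannot follow from $P_t$-freeness alone) nor a termination argument. You identify the crux accurately, but identifying it is not resolving it.

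There is also a structural point that makes the radius-$2$ ball a shakier arena than the write-up suggests. The confinement to $B$ is a consequence of the induced-path argument only while every vertex $v_0,\dots,v_{t-4}$ remains occupied: the robber's restriction to $C_{i+1}$ depends on the cop at $v_i$ guarding $N[v_i]$, and the bound on his depth depends on the entire path being intact. The instant any cop leaves its post to prosecute the endgame inside $B$, the hypothesis of the confinement argument fails and the robber is no longer obliged to stay in $B$. So the endgame cannot be treated as a standalone pursuit problem on $G[B]$ with $t-3$ free cops; any correct argument must interleave the chase with re-confinement, and that interleaving is exactly the missing content. In short: the reduction to the radius-$2$ ball is sound, everything after it is conjectural, and the conjecture remains open.
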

  
 The first case, namely that $2$ cops can capture a robber in a $P_5$-free graph looks particularly interesting. The absence of a structure theorem for $P_5$-free graphs, together with the class' stubborn resistance to the famous ``Erd\"{o}s-Hajnal conjecture" makes it all the more attractive.

\begin{acknowledgements}
I would like to thank Maria Chudnovsky, Alex Scott, and Paul Seymour, for teaching me the Gy\'{a}rf\'{a}s path argument, and Gwena\"{e}l Joret for discussion on the cop number and the proof in \cite{JKT}. 
\end{acknowledgements}

\end{document}